\documentclass[11pt]{article}

\usepackage[a4paper,vmargin={25mm,25mm},hmargin={25mm,25mm}]{geometry}
\usepackage{amsmath,amssymb,amsthm,mathtools}
\usepackage{enumitem}
\usepackage{hyperref}
\usepackage{microtype}

\hypersetup{colorlinks=true,linkcolor=blue,citecolor=blue,urlcolor=blue}

\newtheorem{theorem}{Theorem}[section]
\newtheorem{lemma}[theorem]{Lemma}
\newtheorem{proposition}[theorem]{Proposition}

\newcommand{\R}{\mathbb R}
\newcommand{\eps}{\varepsilon}
\newcommand{\diam}{\operatorname{diam}}
\newcommand{\dist}{\operatorname{dist}}
\newcommand{\vol}{\operatorname{vol}}
\newcommand{\cN}{\mathcal N}
\newcommand{\cA}{\mathcal A}

\title{Spectral Bounds for Antipodal Graphs}
\author{Samuel Korsky}
\date{February 28, 2026}

\begin{document}
\maketitle

\begin{abstract}
\noindent
Suppose $\left\{x_1, \dots, x_n\right\} \subset \mathbb{R}^2$ is a set of $n$ points in the plane with diameter $\leq 1$, meaning $|x_i - x_j| \leq 1$ for all $1 \leq i,j \leq n$. We show that the ratio of the number of ``neighbors'' (ordered pairs of points with distance $\leq \varepsilon$) to the number of ``antipodes'' (ordered pairs of points with distance $\geq 1 - \varepsilon$) is $\gtrsim\varepsilon^{1/2 + o(1)}$, attaining the conjectured correct asymptotic within a polylog factor and improving the $\gtrsim\varepsilon^{3/4+o(1)}$ bound of Steinerberger (2025). In dimensions $d\ge3$ we prove a similar result with exponent $3(d - 1)/4$.
\end{abstract}

\section{Introduction}

Let $P=\{x_1,\ldots,x_n\}\subset\R^d$ have diameter at most $1$.  For $0<\eps<1/10$, define the ordered counts
\[
   \cN_\eps(P):=\#\{(i,j): |x_i-x_j|\le \eps\}
\]
and
\[
   \cA_\eps(P):=\#\{(i,j): |x_i-x_j|\ge 1-\eps\}.
\]

\bigskip
\noindent
Steinerberger proved in the plane that
\[
   \cN_\eps(P)
   \gtrsim
   \frac{\eps^{3/4}}{\log(1/\eps)^{1/4}}\cdot\cA_\eps(P)
\]
and observed that the expected optimal exponent is $1/2$ \cite{Steinerberger2025}.  The two standard examples are points distributed on a circle of diameter $1$, and a Reuleaux-type example consisting of many points on a circular arc together with a smaller cluster at the corresponding center.  In both cases the ratio between neighbor pairs and antipodal pairs is of order $\eps^{1/2}$.

\bigskip
\noindent
Our first result reaches this exponent up to a logarithm:

\begin{theorem}[Planar estimate]\label{thm:plane}
There are absolute constants $c>0$ and $\eps_0>0$ such that, for every $0<\eps<\eps_0$ and every finite $P\subset\R^2$ with $\diam(P)\le1$,
\[
   \cN_\eps(P)
   \ge
   c\cdot\frac{\eps^{1/2}}{\log(1/\eps)^{1/2}}\cdot\cA_\eps(P).
\]
\end{theorem}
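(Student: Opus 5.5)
We plan to prove Theorem~\ref{thm:plane} by a weighted double-counting argument that uses the geometry of near-diameter pairs in the plane. Write $\cA_\eps=\sum_i a_i$, where $a_i$ is the number of $j$ with $|x_i-x_j|\ge 1-\eps$, and $\cN_\eps=\sum_i m_i$, where $m_i=\#\{j:|x_i-x_j|\le\eps\}$. The key planar fact is the following. If $|x_i-x_j|\ge1-\eps$ and the diameter is at most $1$, then every other point lies in the lens $B(x_i,1)\cap B(x_j,1)$. Near $x_j$, this lens is contained in a cap of $B(x_i,1)$. Consequently the antipodes of $x_i$ lie in the annular sector $\{y: 1-\eps\le |y-x_i|\le 1\}$, and the diameter constraint forces them into an arc of angular width $O(1)$. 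Combined with the fact that antipodal pairs ``cross'' (two diameter-like segments of length $\ge 1-\eps$ must intersect or nearly intersect), this gives a curvature constraint. The antipodal set of a point lies in an $\eps$-thin strip of width $\eps$ along a unit circle. Covering that strip by squares of side $\sim\eps$ requires roughly $\eps^{-1}$ cells, but the crossing constraint allows only about $\eps^{-1/2}$ ``independent directions'' of antipodal pairs.

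Concretely, we first partition the plane into a grid of cells of side $\eps/2$, so that points in the same cell are neighbors, and we set $n_Q=|P\cap Q|$. Then $\cN_\eps\ge\sum_Q n_Q^2$, while $\cA_\eps\le\sum_{(Q,Q')\text{ antipodal}} n_Qn_{Q'}$, where a pair of cells is antipodal if it contains points at distance $\ge1-\eps$. We thereby reduce to a weighted bipartite-type inequality
\[
\sum_{(Q,Q')\in E} n_Qn_{Q'}\lesssim \Big(\frac{\log(1/\eps)}{\eps}\Big)^{1/2}\sum_Q n_Q^2,
\]
where $E$ is the graph of antipodal cell pairs. By the spectral bound $\sum_E n_Qn_{Q'}\le\lambda_{\max}(E)\sum n_Q^2$, it suffices to bound the top eigenvalue of the adjacency matrix of $E$, restricted to the occupied cells, by $O((\eps^{-1}\log(1/\eps))^{1/2})$. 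Since $\lambda_{\max}^2\le\max_Q\sum_{Q'\sim Q}\deg(Q')$ does not suffice directly, we use the Schur test with weights adapted to the geometry. We parametrize each antipodal pair by its direction $\theta$. Pairs whose directions differ by $\delta$ force the endpoints to lie within $O(\eps/\delta)$ of each other transversally, and a standard computation then shows that the number of walks of length two is controlled dyadically in $\delta$. Each dyadic scale $\delta\in[\eps^{1/2},1]$ contributes $O(\eps^{-1})$ to $\mathrm{tr}$-type counts of length-$4$ walks, which yields $\lambda_{\max}^4\lesssim \eps^{-2}\log(1/\eps)$.

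The main obstacle is this eigenvalue bound: proving that the antipodal cell graph, restricted to any configuration compatible with diameter $\le 1$, has $\lambda_{\max}\lesssim(\eps^{-1}\log(1/\eps))^{1/2}$. We would attempt it via the fourth-moment (trace of $A^4$) bound, or more precisely its weighted version $\sum_{Q}(\sum_{Q'\sim Q} n_{Q'})^2$, counting $4$-cycles $Q_1Q_2Q_3Q_4$ of antipodal cells. The counting uses the crossing lemma: if $Q_1Q_2$ and $Q_3Q_4$ are both near-diameters, then they cross at an angle $\delta$ with $\eps/\delta$-localization. The logarithm arises exactly from summing over the dyadic angles, which is consistent with the $\log(1/\eps)^{1/2}$ loss in the statement. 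The circle example shows that the bound is sharp: every cell has $\sim\eps^{-1/2}$ antipodal cells.
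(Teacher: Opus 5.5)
Your reduction to the antipodal cell graph, $\cN_\eps\ge\sum_Q n_Q^2$ and $\cA_\eps\le\lambda_{\max}\sum_Q n_Q^2$, is the same as the paper's. The gap is the decisive step, $\lambda_{\max}\lesssim(\eps^{-1}\log(1/\eps))^{1/2}$: you never prove it, and the route you propose cannot work.

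A count of closed $4$-walks, $\lambda_{\max}^4\le\operatorname{tr}(A^4)=\sum_{a,b}|N(a)\cap N(b)|^2$, already loses a power of $\eps$ for the circle of diameter $1$:
\begin{itemize}
\item there are $\asymp\eps^{-1}$ cells, and each has $\asymp\eps^{-1/2}$ antipodal cells;
\item each cell $a$ has $\asymp\eps^{-1/2}$ cells $b$ (those within $c\eps^{1/2}$ of it) sharing $\asymp\eps^{-1/2}$ common neighbours with it;
\item hence $\operatorname{tr}(A^4)\asymp\eps^{-5/2}$. Equivalently, $\asymp\eps^{-1/2}$ eigenvalues have size $\asymp\eps^{-1/2}$.
\end{itemize}
The trace therefore gives at best $\lambda_{\max}\lesssim\eps^{-5/8}$. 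Your claimed $\lambda_{\max}^4\lesssim\eps^{-2}\log(1/\eps)$ cannot come from any trace count. It also contradicts your own bookkeeping: $O(\eps^{-1})$ per dyadic scale over $\log(1/\eps)$ scales gives $\lambda_{\max}^2\lesssim\eps^{-1}\log(1/\eps)$, not $\lambda_{\max}^4\lesssim\eps^{-2}\log(1/\eps)$. The scale range $\delta\in[\eps^{1/2},1]$ is also wrong. In the circle example, essentially all of the contribution comes from pairs at distance or angle $\lesssim\eps^{1/2}$, so the relevant range is $[\eps,1]$.

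The bound you set aside as insufficient is exactly what the paper uses:
\[
\lambda_{\max}^2\le\max_a\sum_{b\in N(a)}d_b=\max_a\sum_b|N(a)\cap N(b)|.
\]
This is Collatz--Wielandt with test vector $\sqrt{d_a}$, equivalently a constant-weight Schur test for $A^2$. Your ``weighted version'' $\sum_Q(\sum_{Q'\sim Q}n_{Q'})^2=\|An\|^2$ reduces to this same row sum once you use $n_bn_c\le\frac12(n_b^2+n_c^2)$. It is a $2$-walk count, not a $4$-cycle count.

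The geometric input is the localization you half-identified, applied per vertex, together with a boundary-layer count:
\begin{itemize}
\item If $\dist(Q_a,Q_b)=r\gtrsim\eps$, the common antipodal cells lie in the intersection of two annuli of radius $\approx1$ and width $O(\eps)$ whose centres are $r$ apart. That intersection is covered by $O(1/r)$ cells.
\item A point of $\operatorname{conv}(P)$ at depth greater than $\eps$ has no antipode, since it can be pushed back by $\eps$ away from any other point. So every cell carrying an edge lies in an $O(\eps)$-neighbourhood of a convex curve of length at most $\pi$. Hence only $O(r/\eps)$ such cells lie within distance $r$ of $Q_a$.
\end{itemize}
Each dyadic range of $r\in[\eps,1]$ then contributes $O(\eps^{-1})$ to the row sum, which gives $\lambda_{\max}^2\lesssim\eps^{-1}\log(1/\eps)$.

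Your sketch never states this boundary-layer count. Without it, a full $\eps/2$-grid only gives $O((r/\eps)^2)$ cells within distance $r$, and the per-scale bound fails. The crossing lemma is not needed.
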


\noindent
In dimension $d$, the most immediate discretization has about $\eps^{-(d-1)}$ cells near the boundary of the convex hull.  Bounding the spectral radius of the resulting graph by the number of cells gives only the trivial estimate
\[
   \cN_\eps(P)\gtrsim_d \eps^{d-1}\cA_\eps(P).
\]
A common-neighborhood argument improves this to the exponent $d-3/2$.  We also prove a second, trace-based estimate using directional caps which is stronger in dimensions $d\ge4$.

\begin{theorem}[Higher-dimensional estimate]\label{thm:higher}
Let $d\ge3$.  There are constants $c_d>0$ and $\eps_d>0$ such that, for every $0<\eps<\eps_d$ and every finite $P\subset\R^d$ with $\diam(P)\le1$,
\[
   \cN_\eps(P)
   \ge
   c_d\,\eps^{\alpha_d}\,\cA_\eps(P),
   \qquad
   \alpha_d=\frac{3(d-1)}4.
\]
\end{theorem}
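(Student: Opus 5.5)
We follow the spectral framework suggested by the introduction. The proof compares two graphs on a discretization of the points.

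\textbf{Setup.} Partition $\R^d$ into cubes of side $\eps/\sqrt d$, so any two points in the same cell are within distance $\eps$. Let $m_Q$ be the number of points of $P$ in cell $Q$. Then
\[
\cN_\eps(P)\ \ge\ \sum_Q m_Q^2 = \|m\|_2^2 .
\]
Define the antipodal cell graph $G$ by joining $Q\sim Q'$ when some pair of points $x\in Q$, $y\in Q'$ satisfies $|x-y|\ge 1-\eps$. Then
\[
\cA_\eps(P)\le \sum_{Q\sim Q'} m_Q m_{Q'} = m^{T}A_G\, m \le \lambda_{\max}(A_G)\,\|m\|_2^2 .
\]
It therefore suffices to prove $\lambda_{\max}(A_G)\lesssim_d \eps^{-3(d-1)/4}$.

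Cells incident to any edge lie in an $O(\eps)$-neighbourhood of the boundary of the convex hull. There are $O(\eps^{-(d-1)})$ such cells, which gives the trivial bound. The improvement comes from bounding the spectral radius by a trace:
\[
\lambda_{\max}^4\le \operatorname{tr}(A_G^4),
\]
which counts closed $4$-walks $Q_1Q_2Q_3Q_4$ in $G$. The target is $\operatorname{tr}(A_G^4)\lesssim \eps^{-3(d-1)}$.

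\textbf{Geometric input.} Suppose $|x-y|\ge1-\eps$ and the diameter is at most $1$. Then every point of $P$ lies in the intersection of unit balls centred at $x$ and at $y$. As a consequence, $x$ lies within $O(\eps)$ of the "far cap" of the set as seen from $y$. More precisely, the direction $u=(x-y)/|x-y|$ determines $x$ up to an error confined to a thin lens.

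The key lemma I would aim for is a directional cap statement. Fix a cell $Q$ and a unit direction $u$. The set of cells $Q'$ adjacent to $Q$ whose connecting direction lies in a cap of angular radius $\theta$ around $u$ has size $O\bigl((\theta/\eps)^{d-1}\bigr)$ when $\theta\gtrsim\eps$. Moreover, two cells both adjacent to the same pair $Q_1,Q_3$ have connecting directions that are nearly antiparallel-consistent. Concretely, if $Q_2$ and $Q_4$ are both antipodal to $Q_1$ and to $Q_3$, then all four cells lie on "diameter-like" chords. The strict convexity of the ball intersection forces
\[
|Q_1Q_3|\cdot|Q_2Q_4|\lesssim \eps ,
\]
where distances are measured in the ambient space. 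This is the analogue of the planar fact that two crossing near-diameters meet at a controlled angle. With cell size $\eps$, a product bound of this form gives the count below.

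\textbf{Counting.} Using the product bound, I would sum over $Q_1$, then over $Q_3$ at distance $r$ from $Q_1$. For each dyadic $r\in[\eps,1]$:
\begin{itemize}[leftmargin=*]
\item the number of boundary cells $Q_3$ at distance about $r$ from $Q_1$ is $O\bigl((r/\eps)^{d-1}\bigr)$;
\item the pairs $(Q_2,Q_4)$ are common antipodal neighbours with $|Q_2Q_4|\lesssim \eps/r$, together with an angular constraint that confines them to $O\bigl((\eps^{-1/2})^{d-1}\bigr)$-type sets.
\end{itemize}
I would balance the two regimes $r\lessgtr\eps^{1/2}$. This should yield $\operatorname{tr}(A_G^4)\lesssim \eps^{-(d-1)}\cdot\eps^{-(d-1)}\cdot\eps^{-(d-1)}$, that is, $\lambda_{\max}\lesssim\eps^{-3(d-1)/4}$. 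Any logarithmic losses from the dyadic sum are absorbed by allowing $d\ge3$ and the exponent bookkeeping, or else they are avoided by a geometric series.

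\textbf{Main obstacle.} I expect the hardest step to be the geometric lemma controlling common antipodal neighbourhoods in the $O(\eps)$-thickened boundary, that is, the product bound $|Q_1Q_3|\cdot|Q_2Q_4|\lesssim\eps$ and its cap-counting consequence. The difficulty is that the boundary of the convex hull is not uniformly curved; it can contain flat pieces. The bound therefore has to come from the constraint that all points lie in $B(x,1)\cap B(y,1)$, not from curvature of the hull.

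I would first try to derive it from the four inequalities $|x_1-x_2|,|x_2-x_3|,|x_3-x_4|,|x_4-x_1|\ge1-\eps$ together with all pairwise distances being at most $1$. The plan is to expand $|x_1-x_3|^2$ and $|x_2-x_4|^2$ via the parallelogram-type identity
\[
\sum_{\text{cyc}}|x_i-x_{i+1}|^2=|x_1-x_3|^2+|x_2-x_4|^2+4|m_{13}-m_{24}|^2 ,
\]
where $m_{13}$ and $m_{24}$ are the midpoints of the two diagonals. I would combine this with the diameter bound on the diagonals to force the required smallness.
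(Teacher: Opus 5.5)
\textbf{There is a genuine gap: the geometric lemma behind your $4$-walk count is false for every $d\ge3$.} Your reduction to $\lambda_{\max}(A_G)\lesssim_d\eps^{-3(d-1)/4}$ is the same as the paper's.

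Take $P$ to be the vertices of a regular tetrahedron of side $1$ in $\R^3\subset\R^d$. Then $x_1x_2x_3x_4$ is a closed walk in $G$ with all four sides equal to $1$, yet $|x_1-x_3|=|x_2-x_4|=1$. So $|Q_1Q_3|\cdot|Q_2Q_4|\approx1$, not $\lesssim\eps$.

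This is not a degenerate accident. Put $x_1=(-r/2,0,0)$ and $x_3=(r/2,0,0)$. Every point of the circle $\{(0,\rho\cos t,\rho\sin t)\}$ with $\rho^2=1-r^2/4$ is at distance exactly $1$ from both, and an arc of chord $1$ keeps the diameter at most $1$. In general the common antipodal region of two cells is an $O(\eps)$-thickening of a $(d-2)$-sphere of radius about $1$. Common neighbours $Q_2,Q_4$ can therefore be $\Theta(1)$ apart; the lens has diameter $O(\eps/r)$ only in the plane.

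Consistently, Euler's quadrilateral identity gives only the \emph{lower} bound $|x_1-x_3|^2+|x_2-x_4|^2+4|m_{13}-m_{24}|^2\ge4(1-\eps)^2$. It cannot bound the product of the diagonals from above.

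The counting step is also left unspecified. Suppose you insert the correct two-shell codegree bound $|N(Q_1)\cap N(Q_3)|\lesssim_d\eps^{-(d-2)}/r$ (Lemma~\ref{lem:shell}) into $\operatorname{tr}(A_G^4)=\sum_{Q_1,Q_3}|N(Q_1)\cap N(Q_3)|^2$. The $(r/\eps)^{d-1}$ cells $Q_3$ at distance $\sim r$ contribute $r^{d-3}\eps^{-(3d-5)}$ per $Q_1$. You then get only $\operatorname{tr}(A_G^4)\lesssim\eps^{-(4d-6)}$ (with an extra logarithm when $d=3$), i.e.\ exponent $d-3/2$. That misses $3(d-1)/4$ once $d\ge4$, so a codegree-based walk count does not reach the theorem.

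The missing idea is a \emph{global directional} constraint, and with it the second moment already suffices. Since $\lambda_{\max}(A_G)^2\le\operatorname{tr}(A_G^2)=|E_{\rm ord}|$, it is enough to prove $|E_{\rm ord}|\lesssim_d\eps^{-3(d-1)/2}$. This also implies your target, because $\operatorname{tr}(A_G^4)\le\operatorname{tr}(A_G^2)^2$.

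Your remark that the direction $u$ pins $x$ to a thin lens is the right seed:
\begin{itemize}
\item Fix $u$ with $w_K(u)\ge1-O(\eps)$. Any $x$ in the top cap $\{h_K(u)-x\cdot u\le B\eps\}$ and any $y$ in the bottom cap satisfy $(x-y)\cdot u\ge1-O(\eps)$ and $|x-y|\le1$. Hence $|\pi_{u^\perp}(x-y)|\lesssim\eps^{1/2}$.
\item So each cap projects onto a set of diameter $O(\eps^{1/2})$, has volume $O(\eps^{(d+1)/2})$, and meets $O(\eps^{-(d-1)/2})$ cells. This gives $N_+(u)N_-(u)\lesssim_d\eps^{-(d-1)}$.
\item An edge $(Q,Q')$ with direction $v$ puts $Q$ in the top cap and $Q'$ in the bottom cap for every $u$ with $|u-v|\le c\eps^{1/2}$. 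That set of directions has measure $\gtrsim\eps^{(d-1)/2}$.
\item Integrating over $S^{d-1}$ gives $|E_{\rm ord}|\,\eps^{(d-1)/2}\lesssim_d\eps^{-(d-1)}$, which is exactly the needed bound.
\end{itemize}
This is the paper's argument (Lemma~\ref{lem:capcount} and Proposition~\ref{prop:trace}).
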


\noindent
Note that the conjectural higher-dimensional exponent is $(d-1)/2$, which is far stronger than the bounds proven in this note.

\medskip
\noindent\textbf{Notation.}
We write $A\lesssim_d B$ if $A\le C_dB$ for a constant depending only on $d$, and $A\lesssim B$ if the constant is absolute.  The value of an implicit constant may change from line to line.

\section{Discretization and Spectral Reduction}

We proceed exactly as in \cite{Steinerberger2025}. Let $K=\operatorname{conv}(P)$.  We first restrict to a thin boundary layer:

\begin{lemma}[Interior points are not near-diameter points]\label{lem:interior}
Let $K\subset\R^d$ be convex with $\diam(K)\le1$.  If $x\in K$ and $\dist(x,\partial K)>\eps$, then $|x-y|<1-\eps$ for every $y\in K$.
\end{lemma}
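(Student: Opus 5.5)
The idea is to push $x$ outward along the direction toward $y$: the ball of radius $\eps$ around $x$ lies in $K$, so it supplies a point of $K$ that is $\eps$ farther from $y$ than $x$ is. The diameter bound then forces $|x-y|<1-\eps$.

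Suppose $x\in K$ with $\dist(x,\partial K)>\eps$, and fix $y\in K$. If $y=x$ the claim is trivial, since $0<1-\eps$. So assume $y\ne x$ and set $u=(x-y)/|x-y|$.

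First I will check that the closed ball $\bar B(x,\eps)$ lies in $K$. The reason is that $x$ lies in the interior of $K$ (it is a point of $K$ at positive distance from the boundary), and any segment from $x$ to a point outside $K$ must cross $\partial K$. Such a crossing point is within distance $\eps$ of $x$, which contradicts $\dist(x,\partial K)>\eps$. In fact the open ball $B(x,\delta)$ lies in $K$ for some $\delta>\eps$, which gives a little slack.

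Next, take $\eps'$ with $\eps<\eps'<\dist(x,\partial K)$ and put $z=x+\eps' u$. Then $z\in K$, because $|z-x|=\eps'<\dist(x,\partial K)$ and by the same segment argument. Since $z-y=(|x-y|+\eps')u$, we get
\[
|z-y|=|x-y|+\eps'.
\]

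Finally, because $\diam(K)\le 1$ and $y,z\in K$, we have $|x-y|+\eps'\le 1$. This gives $|x-y|\le 1-\eps'<1-\eps$, as required.

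The only mildly delicate point is getting the strict inequality. Using a radius $\eps'$ strictly between $\eps$ and $\dist(x,\partial K)$ handles it. Alternatively one can use $z=x+\eps u$, which gives $|x-y|\le 1-\eps$, and then argue strictness separately; the slack choice of $\eps'$ is cleaner. I expect no real obstacle beyond stating the fact that $\bar B(x,r)\subset K$ whenever $r<\dist(x,\partial K)$ for $x\in K$. This follows from connectedness of segments together with the fact that $K$ is closed; here $K=\operatorname{conv}(P)$ is compact.
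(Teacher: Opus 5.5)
Your proof is correct and is essentially the paper's argument. The paper also moves from $x$ a distance $\eps+\eta$ (your $\eps'$) directly away from $y$, stays inside $K$ because $\dist(x,\partial K)>\eps$, and then applies $\diam(K)\le1$ to get the strict inequality; your segment-crossing justification that $B(x,\dist(x,\partial K))\subset K$ just spells out a step the paper leaves implicit.
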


\begin{proof}
Fix $y\ne x$ and set $u=(y-x)/|y-x|$.  Since $\dist(x,\partial K)>\eps$, there is $\eta>0$ such that $x-(\eps+\eta)u\in K$.  Hence
\[
   |y-x|+\eps+\eta
   =|y-(x-(\eps+\eta)u)|
   \le \diam(K)
   \le 1.
\]
Thus $|x-y|<1-\eps$.  The case $x=y$ is trivial.
\end{proof}

\smallskip
\noindent
We shall also use the following standard covering fact for convex bodies.  It follows, for instance, from the Steiner formula and the local surface-area bound for convex hypersurfaces; see, e.g., \cite[Chapters 4 and 5]{Schneider2014}.

\begin{lemma}[Boundary-layer decomposition]\label{lem:cover}
Let $K\subset\R^d$ be convex with $\diam(K)\le1$, and let $0<\eps<1$.  The layer
\[
   K_\eps:=\{x\in K:\dist(x,\partial K)\le\eps\}
\]
can be covered by sets $Q_1,\ldots,Q_k$ of diameter at most $\eps/10$, with
\[
   k\lesssim_d \eps^{-(d-1)}.
\]
Moreover, if $r\ge\eps$, then every ball of radius $r$ intersects at most
\[
   O_d\!\left((r/\eps)^{d-1}\right)
\]
of the sets $Q_a$.  Finally, the cover may be chosen so that, for any set $U\subset\R^d$ and any $O_d(\eps)$-neighborhood $U^+$ of $U$,
\[
   \#\{a:Q_a\cap U\ne\varnothing\}
   \lesssim_d \eps^{-d}\,\vol(U^+\cap K_{O_d(\eps)}).
\]
\end{lemma}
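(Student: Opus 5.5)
Cover the boundary layer by a grid of cubes and keep only those that meet $K_\eps$. Fix the side length $s=\eps/(10\sqrt d)$, so every cube has diameter at most $\eps/10$. Let $Q_1,\ldots,Q_k$ be the cubes of the lattice $s\mathbb Z^d$ that intersect $K_\eps$; their intersections with $K_\eps$ cover the layer. All three claims then reduce to one volume estimate for boundary shells of convex bodies.

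\textbf{Volume estimate.} For $\rho\ge\eps$ (or $\rho=O_d(\eps)$) we have $\vol(K_\rho)\le \rho\,\mathcal H^{d-1}(\partial K)$. This follows from the coarea formula applied to $\dist(\cdot,\partial K)$, since each inner parallel body is convex, contained in $K$, and so has perimeter at most $\mathcal H^{d-1}(\partial K)$. Monotonicity of surface area under inclusion of convex bodies gives $\mathcal H^{d-1}(\partial K)\lesssim_d 1$, because $K$ lies in a ball of radius $1$.

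\textbf{First claim.} Every chosen cube lies in the $\sqrt d\,s$-neighborhood of $K_\eps$. That neighborhood is contained in $K_{2\eps}$ together with the outer parallel shell $(K+B_{2\eps})\setminus K$. By Steiner's formula the outer shell has volume $\sum_{j\ge1}\binom dj W_j(K)(2\eps)^j\lesssim_d\eps$. The cubes are disjoint, so $k\,s^d\lesssim_d\eps$, which gives $k\lesssim_d\eps^{-(d-1)}$.

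\textbf{Third claim.} This is the same argument localized to $U$. Every cube meeting $U$ (and meeting $K_\eps$) lies in the $O_d(\eps)$-neighborhood $U^+$ and in $K_{O_d(\eps)}$, where $K_{O_d(\eps)}$ is read as the two-sided shell $\{x:\dist(x,\partial K)\le O_d(\eps)\}$, or the outer part is absorbed by a slight enlargement. Disjointness of the cubes then gives $\#\{a:Q_a\cap U\ne\varnothing\}\cdot s^d\le\vol(U^+\cap K_{O_d(\eps)})$.

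\textbf{Second claim.} Apply the third claim with $U=B(z,r)$, so that $U^+=B(z,r+O(\eps))\subset B(z,Cr)$. The problem becomes the local estimate
\[
\vol\bigl(B(z,Cr)\cap K_{C\eps}\bigr)\lesssim_d r^{d-1}\eps,
\]
taken over both the inner and outer shells. I expect this to be the main step. The plan is to use the coarea formula again. For $t\le C\eps$, the slice $\{\dist=t\}\cap B(z,Cr)$ is a piece of the boundary of a convex body, intersected with a ball. A convex hypersurface meets a ball of radius $R$ in area at most the full surface area of that ball, by monotonicity applied to $\operatorname{conv}(\text{piece})$ and a projection argument. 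This bounds each slice by $\lesssim_d R^{d-1}$, and integrating over $t\in[0,C\eps]$ gives the estimate. On the outer side the level sets are boundaries of the convex bodies $K+B_t$, so the same bound applies.

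The delicate point is justifying the local area bound $\mathcal H^{d-1}(\partial L\cap B(z,R))\lesssim_d R^{d-1}$ for a convex body $L$. I would prove it via the nearest-point projection onto the convex set $L\cap \overline{B}(z,R)$. This projection is $1$-Lipschitz and maps $\partial B(z,R)$ onto the boundary of $L\cap\overline{B}(z,R)$, which contains $\partial L\cap B(z,R)$. Surface area therefore does not increase, giving $\mathcal H^{d-1}(\partial L\cap B(z,R))\le\mathcal H^{d-1}(\partial B(z,R))\lesssim_d R^{d-1}$.
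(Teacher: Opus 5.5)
Your proof is correct and follows the paper's route. You use half-open grid cubes of side $\asymp\eps$ meeting $K_\eps$, and by disjointness all three counts reduce to the two-sided tube-volume bounds $\vol((K_\eps)_{C\eps})\lesssim_d\eps$ and $\vol(B(z,Cr)\cap\{\dist(\cdot,\partial K)\le C\eps\})\lesssim_d\eps r^{d-1}$. The only differences are that you actually prove these estimates where the paper just cites them as standard (via coarea with convex level sets, monotonicity of surface area, Steiner's formula, and the nearest-point projection onto $L\cap\overline{B}(z,R)$), and that you derive the local count from the third claim. Your two-sided reading of $K_{O_d(\eps)}$ is necessary, since for very thin $K$ the inner layer alone has negligible volume, and it is exactly what the paper's neighborhoods $(K_\eps)_{C\eps}$ implicitly use.
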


\begin{proof}
Take the nonempty intersections of $K_\eps$ with a grid of half-open cubes of side comparable to $\eps$.  The global and local cardinality bounds follow from the standard tube-volume estimates
\[
   \vol\big((K_\eps)_{C\eps}\big)\lesssim_d \eps,
   \qquad
   \vol\big((K_\eps\cap B(z,r))_{C\eps}\big)\lesssim_d \eps r^{d-1},
\]
valid for convex bodies of diameter at most $1$ and $r\ge\eps$.  The last assertion follows from the same grid construction: every selected cube is contained in an $O_d(\eps)$-neighborhood of $U$.
\end{proof}

\smallskip
\noindent
We now build the graph.  By Lemma \ref{lem:interior}, only points of $P\cap K_\eps$ can occur in $\cA_\eps(P)$.  Cover $K_\eps$ by cells $Q_1,\ldots,Q_k$ as in Lemma \ref{lem:cover}.  Assign each point of $P\cap K_\eps$ to one cell and let
\[
   n_a:=\#(P\cap Q_a)
\]
be the multiplicity of cell $Q_a$.

\bigskip
\noindent
Define the antipodal cell graph $G=(V,E)$ by $V=\{1,\ldots,k\}$ and
\[
   a\sim b
   \quad\Longleftrightarrow\quad
   \sup_{x\in Q_a,\,y\in Q_b}|x-y|\ge1-\eps.
\]
Let $M$ be its adjacency matrix.  For $\eps$ sufficiently small this graph has no loops, since each cell has diameter at most $\eps/10<1-\eps$.  Also, all pairs of points assigned to the same cell are $\eps$-neighbors, and so
\[
   \cN_\eps(P)\ge \sum_{a=1}^k n_a^2=\|n\|_2^2.
\]
Conversely, every near-diameter pair of points is assigned to adjacent cells.  Therefore
\[
   \cA_\eps(P)
   \le
   \sum_{a,b}M_{ab}n_an_b
   =\langle n,Mn\rangle
   \le
   \lambda_1(M)\|n\|_2^2.
\]
We have proved the following reduction.

\begin{proposition}[Spectral reduction]\label{prop:spectral-reduction}
If $\cA_\eps(P)>0$, then
\[
   \frac{\cN_\eps(P)}{\cA_\eps(P)}
   \ge
   \lambda_1(M)^{-1},
\]
where $M$ is the adjacency matrix of the antipodal cell graph.  If $\cA_\eps(P)=0$, the desired inequalities are trivial.
\end{proposition}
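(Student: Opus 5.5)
The argument assembled just before the statement already contains all the ingredients; the plan is to record them in order and check that each step uses only Lemma~\ref{lem:interior}, Lemma~\ref{lem:cover}, and elementary linear algebra. The approach is to bound the neighbor count from below by the squared norm of the multiplicity vector $n=(n_1,\dots,n_k)$, bound the antipodal count from above by a quadratic form in $M$, and compare the two through the Rayleigh quotient.

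For the lower bound on $\cN_\eps(P)$, every point of $P\cap K_\eps$ is assigned to exactly one cell $Q_a$. Any two points of the same cell, including a point paired with itself, lie at distance at most $\diam(Q_a)\le\eps/10\le\eps$. Hence the $n_a^2$ ordered pairs inside $Q_a$ are all counted in $\cN_\eps(P)$. The sets of such pairs for different $a$ are disjoint, because each point lies in one cell. This gives $\cN_\eps(P)\ge\sum_a n_a^2=\|n\|_2^2$.

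For the upper bound on $\cA_\eps(P)$, take an ordered pair $(x_i,x_j)$ with $|x_i-x_j|\ge1-\eps$. By Lemma~\ref{lem:interior} applied to $K=\operatorname{conv}(P)$, both points lie in $K_\eps$, so they have been assigned to cells $Q_a\ni x_i$ and $Q_b\ni x_j$. The supremum of $|x-y|$ over $Q_a\times Q_b$ is then at least $1-\eps$, so $M_{ab}=1$. For small $\eps$ we have $a\ne b$, since a single cell has diameter below $1-\eps$. Each antipodal pair therefore maps injectively to a choice of one of the $n_a$ points in $Q_a$ and one of the $n_b$ points in $Q_b$ with $M_{ab}=1$. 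This gives $\cA_\eps(P)\le\sum_{a,b}M_{ab}n_an_b=\langle n,Mn\rangle$.

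Since $M$ is a real symmetric matrix, the Rayleigh--Ritz principle gives $\langle n,Mn\rangle\le\lambda_1(M)\|n\|_2^2$, where $\lambda_1(M)$ is the largest eigenvalue. Suppose $\cA_\eps(P)>0$. Then $\langle n,Mn\rangle>0$, so $\lambda_1(M)>0$ and $n\ne0$. Combining the three inequalities,
\[
\cA_\eps(P)\le\lambda_1(M)\,\|n\|_2^2\le\lambda_1(M)\,\cN_\eps(P),
\]
and dividing by $\lambda_1(M)\,\cA_\eps(P)$ gives the claim. If $\cA_\eps(P)=0$, the inequalities in Theorems~\ref{thm:plane} and~\ref{thm:higher} hold trivially, since $\cN_\eps(P)\ge0$.

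There is no real obstacle here. The only points needing care are that the assignment of points to cells is a function, so pairs are not double counted even though the cover may overlap, and that $\eps$ is taken small enough that the graph has no loops. The substantive difficulty is postponed to bounding $\lambda_1(M)$ in the later sections.
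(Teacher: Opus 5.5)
Your proposal is correct and follows the paper's argument essentially verbatim. Same-cell pairs give $\cN_\eps(P)\ge\|n\|_2^2$, Lemma~\ref{lem:interior} and the cell assignment give $\cA_\eps(P)\le\langle n,Mn\rangle$, and the Rayleigh bound gives $\langle n,Mn\rangle\le\lambda_1(M)\|n\|_2^2$. Your insistence that $n_a$ count the points \emph{assigned} to $Q_a$, so that overlapping cells cause no double counting, is a harmless extra precaution; in the paper's half-open grid construction the cells are disjoint anyway.
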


\section{The Collatz--Wielandt Estimate}

For $a\in V$, write
\[
   N(a):=\{b\in V:a\sim b\},
   \qquad
   d_a:=|N(a)|.
\]

\begin{lemma}[Collatz--Wielandt degree estimate]\label{lem:CW}
Let $G$ be a finite undirected graph with adjacency matrix $M$.  Then
\[
   \lambda_1(M)
   \le
   \max_{a:d_a>0}\left(\sum_{b\in N(a)}d_b\right)^{1/2}.
\]
\end{lemma}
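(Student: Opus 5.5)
We apply the Collatz--Wielandt bound not to $M$ but to $M^2$, which is symmetric with nonnegative entries and has $\lambda_1(M^2)=\lambda_1(M)^2$. Since $M$ is symmetric, $\lambda_1(M)$ is its spectral radius: by Perron--Frobenius its largest eigenvalue is the largest eigenvalue in absolute value. Hence it suffices to show
\[
   \lambda_1(M)^2\le \max_{a:d_a>0}\sum_{b\in N(a)} d_b .
\]
The plan is to bound the spectral radius of $M^2$ by its maximum row sum, with a small adjustment that removes isolated vertices.

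\textbf{Step 1 (row sums of $M^2$).} For each $a$,
\[
   \sum_c (M^2)_{ac}=\sum_c\sum_b M_{ab}M_{bc}=\sum_{b\in N(a)} d_b .
\]
The Collatz--Wielandt (or Gershgorin) bound for nonnegative matrices says that the spectral radius is at most the maximum row sum. With the all-ones test vector this gives $\rho(M^2)\le\max_a\sum_{b\in N(a)}d_b$.

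\textbf{Step 2 (isolated vertices).} If $d_a=0$, the row sum at $a$ is $0$, so these rows never attain the maximum unless every row sum vanishes. Restricting the maximum to $\{a:d_a>0\}$ therefore costs nothing, except when $G$ has no edges at all. In that case $M=0$ and the right-hand side is an empty maximum, which we interpret as $0$. To make this cleaner, we can instead apply the bound to the principal submatrix of $M$ on the non-isolated vertices. Isolated vertices contribute only zero eigenvalues, so removing them does not change $\lambda_1(M)$ when $\lambda_1(M)>0$.

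\textbf{Step 3 (conclusion).} We have $\lambda_1(M)^2=\rho(M)^2=\rho(M^2)$, and this is at most the maximum row sum found in Step 1. Taking square roots gives the claim.

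Every step is routine. The only point needing care is that $\lambda_1(M)\ge0$ really is the spectral radius, so that squaring is legitimate. A symmetric matrix could in principle have a negative eigenvalue of larger modulus. Perron--Frobenius rules this out for nonnegative matrices: $\rho(M)$ is itself an eigenvalue, so $\lambda_1=\rho$. Alternatively, $\lambda_1(M)^2\le\max_i\lambda_i(M)^2=\rho(M^2)$ holds directly for any symmetric $M$ with $\lambda_1\ge0$, which sidesteps Perron--Frobenius entirely.
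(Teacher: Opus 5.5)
Your proof is correct, but it takes a different route from the paper's.

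The paper works with $M$ itself. After deleting isolated vertices, it applies Collatz--Wielandt with the degree-weighted test vector $x_a=\sqrt{d_a}$. It then uses Cauchy--Schwarz in the form $\sum_{b\in N(a)}\sqrt{d_b}\le\sqrt{d_a}\,\bigl(\sum_{b\in N(a)}d_b\bigr)^{1/2}$.

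You instead pass to $M^2$. You use $\lambda_1(M)^2\le\rho(M^2)$, which follows from the spectral theorem together with $\lambda_1\ge0$. You then apply the max-row-sum bound, i.e.\ Collatz--Wielandt with the all-ones vector, noting that the $a$-th row sum of $M^2$ is exactly $\sum_{b\in N(a)}d_b$.

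Your route has two advantages. It avoids Cauchy--Schwarz and the need for a positive tailored test vector; isolated vertices only affect the indexing of the maximum, and you handle that correctly. It also makes the paper's next identity $\sum_{b\in N(a)}d_b=\sum_b|N(a)\cap N(b)|$ immediate, since $(M^2)_{ab}=|N(a)\cap N(b)|$.

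The paper's route avoids squaring. It also gives the slightly sharper intermediate bound $\lambda_1(M)\le\max_a d_a^{-1/2}\sum_{b\in N(a)}\sqrt{d_b}$ before Cauchy--Schwarz is applied. Nothing later in the paper uses that refinement, so both arguments serve equally well.
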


\smallskip
\begin{proof}
Remove isolated vertices.  The vector $x_a=\sqrt{d_a}$ is then positive.  By the Collatz--Wielandt formula \cite{Collatz1942} and Cauchy's inequality,
\[
   \frac{(Mx)_a}{x_a}
   =
   \frac1{\sqrt{d_a}}\sum_{b\in N(a)}\sqrt{d_b}
   \le
   \left(\sum_{b\in N(a)}d_b\right)^{1/2}.
\]
Taking the maximum over $a$ gives the result.
\end{proof}

\smallskip
\noindent
The weighted degree sum can be written as a common-neighborhood count:
\[
   \sum_{b\in N(a)}d_b
   =
   \sum_{b\in V}|N(a)\cap N(b)|.
\]
This identity is the source of the next two estimates.

\section{Common-Neighborhood Bounds}

\begin{lemma}[Intersection of two thin shells]\label{lem:shell}
Let $d\ge2$ be fixed.  Let $x,y\in\R^d$ and put $r=|x-y|$.  If $A_d\eps\le r\le1$, then the intersection
\[
\begin{aligned}
   &\{z:1-A_d\eps\le |z-x|\le1+A_d\eps\} \\
   &\qquad\cap
   \{z:1-A_d\eps\le |z-y|\le1+A_d\eps\}
\end{aligned}
\]
can be covered by
\[
   O_d\!\left(\frac{\eps^{-(d-2)}}{r}\right)
\]
sets of diameter at most $\eps$.
\end{lemma}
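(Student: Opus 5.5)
We reduce to the geometry of a neighborhood of the intersection of two unit spheres. Put $\delta=A_d\eps$, and choose coordinates with $x=-\tfrac r2 e_1$ and $y=\tfrac r2 e_1$. Write $z=(t,w)$ with $t\in\R$ and $w\in\R^{d-1}$. The two shell conditions give
\[
   \bigl|\,|z-x|^2-|z-y|^2\,\bigr|=2r|t|\le (1+\delta)^2-(1-\delta)^2\lesssim \delta .
\]
Hence every point of the intersection satisfies $|t|\lesssim\delta/r$. This is where the hypothesis $r\ge A_d\eps$ enters: it guarantees $\delta/r\lesssim 1$, so the slab is not wider than the whole configuration.

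Next we control $\rho=|w|$. From $|z-x|^2=(t+r/2)^2+\rho^2\in[(1-\delta)^2,(1+\delta)^2]$ and $|t|\lesssim 1$, we get $\rho\lesssim 1$. So the intersection lies inside
\[
   S=\{(t,w):|t|\lesssim\delta/r,\ \rho\in I_t\},
\]
where $I_t$ is the set of admissible radii for the given $t$. For fixed $t$ the admissible values of $\rho^2$ form an interval of length $O(\delta)$. The main point is to see that $I_t$ is short in $\rho$ as well, i.e.\ to rule out $\rho$ near $0$ where $d\rho=d(\rho^2)/(2\rho)$ blows up. Since $r\le 1$ and $|t|\lesssim\delta/r$, we have $(t+r/2)^2\le (1/2+O(\delta/r))^2$. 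For small $\eps$, and using $\delta/r\le 1$ with the constant adjusted, this forces $\rho^2\ge 1-(1/2+o(1))^2-O(\delta)\gtrsim 1$. I expect this to be the one delicate step. If $\delta/r$ is not small, the choice of $A_d$ has to absorb it; alternatively one treats the regime $r\sim A_d\eps$, where the target bound $\eps^{-(d-1)}$ is trivial by the cover of the whole unit shell, separately. Once $\rho\gtrsim 1$ is established, $I_t$ is an interval of length $O(\delta)=O_d(\eps)$ contained in $[c,2]$.

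It then remains to cover $S$, which is a thickened piece of a cylinder over the sphere $S^{d-2}$. Partition the $t$-range into $O(\delta/(r\eps))=O_d(1/r)$ intervals of length $\eps/4$. Partition the directions $w/|w|\in S^{d-2}$ into $O_d(\eps^{-(d-2)})$ caps of angular diameter $\eps/10$. Partition each $I_t$ into $O_d(1)$ intervals of length $\eps/10$, which requires the $I_t$ for $t$ in one $t$-interval to lie in a common interval of length $O(\delta)$; this holds because $\rho^2$ depends Lipschitzly on $t$. Each product piece has diameter at most $\eps$, since $\rho\le 2$ makes angular size $\eps/10$ correspond to a Euclidean size $\le\eps/5$. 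The total count is
\[
   O_d(1/r)\cdot O_d(\eps^{-(d-2)})\cdot O_d(1)=O_d\!\left(\frac{\eps^{-(d-2)}}{r}\right),
\]
as claimed in Lemma~\ref{lem:shell}.
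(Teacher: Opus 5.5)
Your proof is correct and is the paper's argument made explicit. Both subtract the squared distances to get the slab $|t|\le 2\delta/r$, then cover an $O(\eps)$-thickening of a $(d-2)$-sphere of bounded radius by $O_d(1/r)\cdot O_d(\eps^{-(d-2)})$ product pieces.

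Of your two remedies for $\delta/r\sim 1$, only the case split works. Choosing $A_d$ cannot absorb anything, because the same $A_d$ sets both the shell width and the lower bound on $r$. The case split does close the argument, and the paper's sketch passes over this point silently:
\begin{itemize}
\item for $r\ge C\delta$ with $C$ a large absolute constant, $\delta\le 1/C$ and $|t|\le 2/C$ force $\rho^2\ge 1/2$;
\item for $A_d\eps\le r\le C\delta$, the trivial $O_d(\eps^{-(d-1)})$ cover of a single shell already suffices.
\end{itemize}
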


\smallskip
\begin{proof}
After a rigid motion, take $x=(-r/2,0,\ldots,0)$ and $y=(r/2,0,\ldots,0)$.  Subtracting the two squared-distance equations shows that the first coordinate of any point in the intersection varies over an interval of length $O_d(\eps/r)$.  The remaining level set has dimension $d-2$ and uniformly bounded diameter, while the two shell inequalities thicken it by $O_d(\eps)$ in the normal directions.  Thus the intersection is contained in an $O_d(\eps)$-neighborhood of a $(d-2)$-dimensional set of bounded measure, times one additional interval of length $O_d(\eps/r)$.  Covering this set by $\eps$-balls gives $O_d(\eps^{-(d-2)}/r)$ balls.
\end{proof}

\begin{proposition}[Planar weighted degree bound]\label{prop:weighted2}
For the planar antipodal cell graph,
\[
   \sum_{b\in N(a)}d_b
   \lesssim
   \eps^{-1}\log(1/\eps)
\]
for every vertex $a$.
\end{proposition}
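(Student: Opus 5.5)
We use the identity $\sum_{b\in N(a)}d_b=\sum_{b\in V}|N(a)\cap N(b)|$ and bound each common neighborhood with Lemma~\ref{lem:shell} for $d=2$. Fix a vertex $a$ and a point $x_a\in Q_a$, and for each $b$ fix $x_b\in Q_b$. If $c\in N(a)$, then $\sup_{x\in Q_a, z\in Q_c}|x-z|\ge 1-\eps$. Since cells have diameter at most $\eps/10$, every point of $Q_c$ satisfies $1-2\eps\le |z-x_a|\le 1+\eps/5$, where the upper bound uses $\diam(K)\le 1$. So $Q_c$ lies in a shell of radius $1$ and width $O(\eps)$ around $x_a$. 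The same holds around $x_b$ when $c\in N(b)$. Hence $N(a)\cap N(b)$ indexes cells meeting the intersection of two such shells, with some absolute constant $A_2$.

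We split the vertices $b$ according to $r_b:=|x_a-x_b|$.

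\emph{Near case, $r_b\le A_2\eps$.} Here we use the trivial bound $|N(a)\cap N(b)|\le d_a$. By the shell observation, $N(a)$ consists of cells meeting an $O(\eps)$-neighborhood of $\partial K\cap\{z:|z-x_a|\ge1-2\eps\}$. The second statement of Lemma~\ref{lem:cover} alone gives only $d_a\lesssim\eps^{-1}$ (radius-$1$ ball). We need $d_a=O(1)$ up to logs, which we get from planar convexity. Any $z\in K$ with $|z-x_a|\ge 1-2\eps$ lies in $K\cap B(x_a,1)\setminus B(x_a,1-2\eps)$. Since $K\subset B(x_a,1)$ is convex and contains the point $x_a$, this set is contained in a thin arc region of angular width $O(\eps^{1/2})$ around a far point, so its length is $O(\eps^{1/2})$. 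Hence $d_a\lesssim\eps^{-1/2}$ by the local cardinality bound with $r\sim\eps^{1/2}$. The number of such $b$ is $O(1)$, since only $O(1)$ cells meet $B(x_a,O(\eps))$. So this case contributes $O(\eps^{-1/2})$.

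\emph{Far case, $r_b\ge A_2\eps$.} Lemma~\ref{lem:shell} with $d=2$ says the double-shell intersection is covered by $O(1/r_b)$ sets of diameter $\eps$. Each such set meets $O(1)$ cells, so $|N(a)\cap N(b)|\lesssim 1/r_b$. Group the $b$ dyadically by $r_b\in[2^j\eps,2^{j+1}\eps)$, for $j$ from $O(1)$ up to $\log_2(1/\eps)$. By Lemma~\ref{lem:cover}, the number of cells meeting $B(x_a,2^{j+1}\eps)$ is $O(2^j)$. Each dyadic shell therefore contributes $O(2^j)\cdot O(1/(2^j\eps))=O(\eps^{-1})$. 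Summing over the $O(\log(1/\eps))$ scales gives $O(\eps^{-1}\log(1/\eps))$.

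Adding the two cases gives $\sum_{b\in N(a)} d_b\lesssim \eps^{-1}\log(1/\eps)$.

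\emph{Main obstacle.} The delicate point is making the reduction to Lemma~\ref{lem:shell} rigorous. We must check that a cell in $N(a)\cap N(b)$ really meets the double shell with a uniform $A_2$ independent of the choice of representatives. We must also check that "covered by $O(1/r)$ sets of diameter $\eps$" converts into $O(1/r)$ cells. This follows from the local bound in Lemma~\ref{lem:cover} with $r=\eps$, which says each $\eps$-ball meets $O(1)$ cells.

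If the near-case convexity argument for $d_a$ proves awkward, there is a fallback. Only the cells $b$ with $r_b\lesssim\eps$ are involved, and there are $O(1)$ of them, so it suffices to know that $d_a\lesssim\eps^{-1}$ for each. This crude bound is immediate from Lemma~\ref{lem:cover} and is still within the target $\eps^{-1}\log(1/\eps)$. We would use this fallback and avoid the $\eps^{1/2}$ arc estimate entirely.
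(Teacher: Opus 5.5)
Your proof, in the version you commit to (using the fallback in the near case), is correct and is essentially the paper's argument. The $O(1)$ cells $b$ close to $Q_a$ are handled by the crude bound $|N(a)\cap N(b)|\le k\lesssim\eps^{-1}$. The remaining cells are handled by the $1/r$ bound from Lemma~\ref{lem:shell} combined with the local count of Lemma~\ref{lem:cover}. Your dyadic sum over $r_b\in[2^j\eps,2^{j+1}\eps)$ is just a reparametrization of the paper's layer-cake sum over $s$.

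The fallback is required, though, because the claimed bound $d_a\lesssim\eps^{-1/2}$ is false. Take $K$ to be a Reuleaux triangle of width $1$ and let $Q_a$ contain a vertex. Every point of the opposite arc, which has length $\pi/3$, is at distance exactly $1$ from that vertex, so $d_a\gtrsim\eps^{-1}$. This is exactly the ``cluster at the center'' in the Reuleaux example from the introduction. The $O(\eps^{1/2})$ transverse width you have in mind holds for a pair of opposite directional caps, as in Lemma~\ref{lem:capcount}. It does not hold for the set of points far from a single point, so that paragraph should be deleted.
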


\begin{proof}
Fix $a$.  Let $W$ be the set of cells at distance at most $A\eps$ from $Q_a$, where $A$ is a sufficiently large absolute constant.  By Lemma \ref{lem:cover}, $|W|=O(1)$, and the contribution of $W$ to $\sum_b|N(a)\cap N(b)|$ is $O(\eps^{-1})$.

\bigskip
\noindent
Now let $b\notin W$ and put $r=\dist(Q_a,Q_b)$.  If $c\in N(a)\cap N(b)$, then, after enlarging constants to account for the diameters of the cells, $Q_c$ meets the intersection of two shells of thickness $O(\eps)$ around points of $Q_a$ and $Q_b$.  Lemma \ref{lem:shell} gives
\[
   |N(a)\cap N(b)|\lesssim \frac1r.
\]
Consequently, if $|N(a)\cap N(b)|\ge s$, then $r\lesssim s^{-1}$.  Lemma \ref{lem:cover} then implies
\[
   \#\{b\notin W:|N(a)\cap N(b)|\ge s\}
   \lesssim
   \frac1{s\eps}.
\]
Using layer cake summation,
\[
\begin{aligned}
   \sum_{b\notin W}|N(a)\cap N(b)|
   &=
   \sum_{s=1}^{k}\#\{b\notin W:|N(a)\cap N(b)|\ge s\}  \\
   &\lesssim
   \eps^{-1}\sum_{s=1}^{k}\frac1s
   \lesssim
   \eps^{-1}\log(1/\eps),
\end{aligned}
\]
since $k\lesssim\eps^{-1}$ in the plane.
\end{proof}

\begin{proposition}[Higher-dimensional weighted degree bound]\label{prop:weightedD}
Let $d\ge3$.  For the antipodal cell graph in $\R^d$,
\[
   \sum_{b\in N(a)}d_b
   \lesssim_d
   \eps^{-(2d-3)}
\]
for every vertex $a$.
\end{proposition}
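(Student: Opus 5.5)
We mirror the proof of Proposition \ref{prop:weighted2}, using the identity
\[
   \sum_{b\in N(a)}d_b=\sum_{b\in V}|N(a)\cap N(b)|.
\]
We split the sum over $b$ into a near part and a far part, and bound each one using Lemma \ref{lem:shell} and the local counting in Lemma \ref{lem:cover}.

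\textbf{Near part.} Fix $a$ and let $W$ be the set of cells at distance at most $A_d\eps$ from $Q_a$, with $A_d$ a large constant. By Lemma \ref{lem:cover}, with $r\asymp\eps$, we have $|W|=O_d(1)$. Each term is bounded by
\[
   |N(a)\cap N(b)|\le d_a .
\]
To bound $d_a$, note that every $c\in N(a)$ meets a shell of radius about $1$ and thickness $O_d(\eps)$ around a point of $Q_a$, intersected with $K_{O_d(\eps)}$. The volume form of Lemma \ref{lem:cover} then gives
\[
   d_a\lesssim_d \eps^{-d}\cdot \eps\cdot\min(1,\cdot)\lesssim_d\eps^{-(d-1)}.
\]
One can also simply use $d_a\le k\lesssim_d \eps^{-(d-1)}$. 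So $W$ contributes $O_d(\eps^{-(d-1)})$, which is at most $\eps^{-(2d-3)}$ since $d\ge2$.

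\textbf{Far part.} For $b\notin W$, put $r=\dist(Q_a,Q_b)\ge A_d\eps$. Enlarging the shell thickness to absorb the cell diameters, every $c\in N(a)\cap N(b)$ has $Q_c$ meeting the intersection of two shells of thickness $O_d(\eps)$ centred at points $x\in Q_a$ and $y\in Q_b$ with $|x-y|\asymp r$. Lemma \ref{lem:shell} covers this intersection by $O_d(\eps^{-(d-2)}/r)$ sets of diameter $\le\eps$. Each of these meets $O_d(1)$ cells, by the local bound of Lemma \ref{lem:cover} with radius $\eps$. Hence
\[
   |N(a)\cap N(b)|\lesssim_d \frac{\eps^{-(d-2)}}{r}.
\]
Next decompose $b\notin W$ dyadically by $r\in[2^j\eps,2^{j+1}\eps)$. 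By Lemma \ref{lem:cover}, the number of cells in a ball of radius $2^{j+1}\eps$ is $O_d(2^{j(d-1)})$. Each such cell contributes $\lesssim_d \eps^{-(d-2)}/(2^j\eps)=\eps^{-(d-1)}2^{-j}$, so the $j$-th shell contributes
\[
   \lesssim_d \eps^{-(d-1)}2^{j(d-2)}.
\]
Since $d\ge3$, the sum over $2^j\eps\lesssim 1$ is geometric and dominated by the largest scale $2^j\asymp\eps^{-1}$. This gives
\[
   \eps^{-(d-1)}\eps^{-(d-2)}=\eps^{-(2d-3)}.
\]
This is also where the logarithm of the planar case disappears: for $d=2$ the dyadic terms are equal, while for $d\ge3$ they grow geometrically.

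The main obstacle is the reduction from cells in $N(a)\cap N(b)$ to Lemma \ref{lem:shell}. The adjacency condition uses a supremum over pairs of points in the two cells, so $c\in N(a)$ only says that some pair of points is at distance $\ge1-\eps$. Together with $\diam\le1$, all points of $Q_c$ lie at distance in $[1-O(\eps),1]$ from a fixed point of $Q_a$. One must check carefully that $A_d$ can be chosen so that both shell conditions hold with the same thickness constant, and that $r\ge A_d\eps$ for the chosen representatives. The shell lemma also needs $r\le1$, which holds automatically since $\diam(K)\le 1$. The remaining counting steps are routine applications of Lemma \ref{lem:cover}.
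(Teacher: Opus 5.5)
Your proof is correct and follows essentially the same argument as the paper. Both proofs discard the $O_d(1)$ cells within $A_d\eps$ of $Q_a$ (contributing $O_d(k)$), bound $|N(a)\cap N(b)|\lesssim_d\eps^{-(d-2)}/r$ for the remaining cells via Lemma~\ref{lem:shell}, and sum using the local cell count of Lemma~\ref{lem:cover}; the only difference is bookkeeping, since you sum over dyadic distance shells $r\in[2^j\eps,2^{j+1}\eps)$ while the paper uses a layer-cake sum over the level sets $\{b:|N(a)\cap N(b)|\ge s\}$, and both find the dominant contribution $k\,\eps^{-(d-2)}\lesssim_d\eps^{-(2d-3)}$ comes from cells at distance $r\asymp 1$.
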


\begin{proof}
Fix $a$, and again discard the $O_d(1)$ cells within distance $A_d\eps$ of $Q_a$.  Their contribution is at most $O_d(k)$, with $k\lesssim_d\eps^{-(d-1)}$, which is smaller than the claimed bound.

\bigskip
\noindent
For a remaining cell $b$, put $r=\dist(Q_a,Q_b)$.  Lemma \ref{lem:shell} gives
\[
   |N(a)\cap N(b)|
   \lesssim_d
   \frac{\eps^{-(d-2)}}{r}.
\]
Set $H=\eps^{-(d-2)}$.  If $|N(a)\cap N(b)|\ge s$, then $r\lesssim_d H/s$.  By Lemma \ref{lem:cover},
\[
   \#\{b:|N(a)\cap N(b)|\ge s\}
   \lesssim_d
   k\cdot\min\left\{1,\left(\frac Hs\right)^{d-1}\right\}.
\]
Therefore
\[
\begin{aligned}
   \sum_b|N(a)\cap N(b)|
   &\lesssim_d
   k\cdot\sum_{s=1}^{k}\min\left\{1,\left(\frac Hs\right)^{d-1}\right\} \\
   &\lesssim_d
   k\cdot\left(H+H^{d-1}\sum_{s>H}s^{-(d-1)}\right).
\end{aligned}
\]
Since $d\ge3$, the tail is $O_d(H^{-(d-2)})$.  Thus the last expression is $O_d(kH)$, and
\[
   kH\lesssim_d
   \eps^{-(d-1)}\eps^{-(d-2)}
   =
   \eps^{-(2d-3)}.
\]
\end{proof}

\smallskip
\noindent
Combining Lemma \ref{lem:CW} with Propositions \ref{prop:weighted2} and \ref{prop:weightedD} gives
\[
   \lambda_1(M)
   \lesssim
   \eps^{-1/2}\log(1/\eps)^{1/2}
   \qquad(d=2),
\]
and
\[
   \lambda_1(M)
   \lesssim_d
   \eps^{-(d-3/2)}
   \qquad(d\ge3).
\]

\smallskip
\noindent
For $d \ge 3$ this bound is subsumed by our trace bound in the next section, but was included for completeness.

\section{Directional Trace Bound}

The preceding higher-dimensional estimate comes from two-shell intersections.  We now prove an independent edge-count estimate which is stronger for $d\ge4$.

\bigskip
\noindent
For a convex body $K\subset\R^d$, let
\[
   h_K(u):=\sup_{x\in K}x\cdot u,
   \qquad
   w_K(u):=h_K(u)+h_K(-u),
   \qquad u\in S^{d-1}.
\]
For $t>0$ define the two caps
\[
   C_+(u,t):=\{x\in K:h_K(u)-x\cdot u\le t\},
\]
and
\[
   C_-(u,t):=\{x\in K:h_K(-u)+x\cdot u\le t\}.
\]
Let $N_\pm(u,t)$ denote the number of cells $Q_a$ meeting $C_\pm(u,t)$.

\begin{lemma}[Reciprocal cap count]\label{lem:capcount}
There is a dimensional constant $B_d$ such that, if $w_K(u)\ge1-B_d\eps$, then
\[
   N_+(u,B_d\eps)\,N_-(u,B_d\eps)
   \lesssim_d
   \eps^{-(d-1)}.
\]
\end{lemma}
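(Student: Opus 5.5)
The plan is to show that each cap separately is contained in a ball of radius $O_d(\eps^{1/2})$. Lemma~\ref{lem:cover} then bounds each of $N_\pm(u,B_d\eps)$ by $O_d(\eps^{-(d-1)/2})$, and multiplying the two bounds gives the claim. The input is the diameter constraint $\diam(K)\le1$ combined with the near-maximal width $w_K(u)\ge 1-B_d\eps$.

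\textbf{Step 1 (long axial separation).} Put $t=B_d\eps$. Both caps are nonempty, because $h_K(\pm u)$ is attained on the compact set $K$. Take $x\in C_+(u,t)$ and $y\in C_-(u,t)$. By definition,
\[
x\cdot u\ge h_K(u)-t,\qquad -y\cdot u\ge h_K(-u)-t,
\]
and adding these gives
\[
(x-y)\cdot u\ \ge\ w_K(u)-2t\ \ge\ 1-3B_d\eps.
\]
(We may assume $\eps$ is small, so this is positive.)

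\textbf{Step 2 (short transverse displacement).} Write $\pi$ for the orthogonal projection onto $u^\perp$. Since $|x-y|\le1$,
\[
|\pi(x-y)|^2=|x-y|^2-((x-y)\cdot u)^2\le 1-(1-3B_d\eps)^2\le 6B_d\eps .
\]
Hence $|\pi x-\pi y|\le C_d\eps^{1/2}$ for every $x\in C_+$ and every $y\in C_-$.

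\textbf{Step 3 (each cap lies in a small ball).} Fix $x_0\in C_+$ and $y_0\in C_-$. For any $x\in C_+$, Step~2 applied to the pairs $(x,y_0)$ and $(x_0,y_0)$ gives
\[
|\pi(x-x_0)|\le 2C_d\eps^{1/2}.
\]
In the axial direction, both $x\cdot u$ and $x_0\cdot u$ lie in $[h_K(u)-t,h_K(u)]$, so $|(x-x_0)\cdot u|\le t=B_d\eps$. Therefore $C_+\subset B(x_0,r)$ with $r=C_d'\eps^{1/2}$, and symmetrically $C_-\subset B(y_0,r)$. For small $\eps$ we have $r\ge\eps$.

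\textbf{Step 4 (counting cells).} Every cell meeting $C_\pm$ meets the corresponding ball $B(\cdot,r)$. The local part of Lemma~\ref{lem:cover} therefore gives
\[
N_\pm(u,B_d\eps)\lesssim_d (r/\eps)^{d-1}\lesssim_d \eps^{-(d-1)/2}.
\]
Multiplying the two estimates yields $N_+N_-\lesssim_d\eps^{-(d-1)}$.

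\textbf{Main obstacle.} The only delicate point is Step~3. One must realize that the transverse control in Step~2 is a statement about pairs taken across the two caps, and convert it into a bound on the diameter of each cap separately. This is done by routing through a fixed point of the opposite cap, as in the proof above. Once that is in place, the constant $B_d$ plays no role beyond keeping the depth $O_d(\eps)$. Any choice works, for instance one large enough to match its later use.
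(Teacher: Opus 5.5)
Your proposal is correct and follows the paper's proof: axial separation $1-O_d(\eps)$, transverse displacement $O_d(\eps^{1/2})$ from $\diam(K)\le1$, then fixing a point of the opposite cap to bound the size of each cap separately. The one difference is the final count. You place each cap in a ball of radius $O_d(\eps^{1/2})$ and use the ball-count clause of Lemma \ref{lem:cover}. The paper instead uses the volume clause, multiplying the projected-diameter bound by the $O_d(\eps)$ axial thickness. Both routes give $N_\pm\lesssim_d\eps^{-(d-1)/2}$.
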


\begin{proof}
Write $m=d-1$ and let $\pi_u:\R^d\to u^\perp$ be orthogonal projection.  Write $B=B_d$.  If $x\in C_+(u,B\eps)$ and $y\in C_-(u,B\eps)$, then
\[
   (x-y)\cdot u
   \ge
   w_K(u)-2B\eps
   \ge
   1-O_d(\eps),
\]
provided $B_d$ is chosen sufficiently large.  Since $\diam(K)\le1$, this implies
\[
   |\pi_u x-\pi_u y|
   \lesssim_d
   \eps^{1/2}.
\]
Fixing one point in each cap, we get
\[
   \diam\big(\pi_u C_+(u,B\eps)\big)
   +
   \diam\big(\pi_u C_-(u,B\eps)\big)
   \lesssim_d
   \eps^{1/2}.
\]
Thus each projected cap has $m$-dimensional volume $O_d(\eps^{m/2})$.  Each cap has thickness $O_d(\eps)$ in the $u$ direction, and so its $d$-dimensional volume is $O_d(\eps^{m/2+1})$.  Enlarging by the cell scale changes only the implicit constant.  Lemma \ref{lem:cover} therefore gives
\[
   N_\pm(u,B\eps)
   \lesssim_d
   \eps^{-d}\eps^{m/2+1}
   =
   \eps^{-m/2}.
\]
Multiplying the two estimates proves the lemma.
\end{proof}

\smallskip
\noindent
\begin{proposition}[Directional trace estimate]\label{prop:trace}
For the antipodal cell graph in $\R^d$,
\[
   \lambda_1(M)
   \lesssim_d
   \eps^{-3(d-1)/4}.
\]
\end{proposition}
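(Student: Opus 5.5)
The plan is to bound $\lambda_1(M)$ by the trace of $M^2$ and then count edges through directional caps. Since $M$ is symmetric with nonnegative entries,
\[
   \lambda_1(M)^2\le \operatorname{tr}(M^2)=\sum_{a,b}M_{ab}^2=2|E|.
\]
So it suffices to show $|E|\lesssim_d \eps^{-3(d-1)/2}$. The idea is to assign every edge to a direction from a fixed net on the sphere, and to bound the number of edges in each direction by Lemma~\ref{lem:capcount}.

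First I fix a maximal $\eps^{1/2}$-separated set $\Theta\subset S^{d-1}$, so that $|\Theta|\lesssim_d \eps^{-(d-1)/2}$. Take an edge $a\sim b$. Choose $x\in \overline{Q_a}$ and $y\in\overline{Q_b}$ with $|x-y|\ge 1-\eps$; the supremum over the bounded cells is attained on closures, or one can lose an extra $\eps$. Since the cells meet $K$, both points lie within $O(\eps)$ of $K$. Set $u=(x-y)/|x-y|$ and pick $u'\in\Theta$ with $|u-u'|\le\eps^{1/2}$. Then
\[
   (x-y)\cdot u'=|x-y|\,u\cdot u' \ge (1-\eps)\left(1-\tfrac12|u-u'|^2\right)\ge 1-O(\eps).
\]
In particular $w_K(u')\ge 1-O(\eps)$. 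We also have $w_K(u')\le \diam(K)\le1$, and $h_K(-u')\ge -y\cdot u'-O(\eps)$ because $y$ is $O(\eps)$-close to $K$. These give
\[
   h_K(u')-x\cdot u' \le w_K(u')+y\cdot u' - x\cdot u' + O(\eps)\le O(\eps),
\]
and symmetrically $h_K(-u')+y\cdot u'\le O(\eps)$. Hence, after enlarging $B_d$ once and for all, $Q_a$ meets $C_+(u',B_d\eps)$ and $Q_b$ meets $C_-(u',B_d\eps)$, up to the harmless $O(\eps)$ fattening that Lemma~\ref{lem:cover} already absorbs. The width hypothesis of Lemma~\ref{lem:capcount} also holds for $u'$.

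Consequently every edge is counted, possibly with $a$ and $b$ swapped, in some product of caps. This gives
\[
   |E|\le \sum_{u'\in\Theta} N_+(u',B_d\eps)\,N_-(u',B_d\eps)
   \lesssim_d \eps^{-(d-1)/2}\cdot \eps^{-(d-1)} = \eps^{-3(d-1)/2},
\]
and therefore $\lambda_1(M)\le (2|E|)^{1/2}\lesssim_d \eps^{-3(d-1)/4}$.

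The step I expect to need the most care is the stability of cap membership under perturbation of the direction. The net spacing must be exactly $\eps^{1/2}$: a finer net increases $|\Theta|$, while a coarser one breaks the $O(\eps)$ cap depth. The quadratic loss $1-u\cdot u'\approx|u-u'|^2/2$ is what makes spacing $\eps^{1/2}$ work. I also need to check that the cells, which are only near $K$ rather than inside it, do not spoil the cap inequalities. I would handle this by working with the $O_d(\eps)$-neighborhood of $K$ throughout and enlarging $B_d$ accordingly.
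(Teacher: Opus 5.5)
Your proof is correct and is essentially the paper's argument: the trace bound $\lambda_1(M)^2\le\operatorname{tr}(M^2)$, stability of cap membership for directions within $\eps^{1/2}$ of an edge direction, and Lemma~\ref{lem:capcount}. The one difference is that you assign each edge to a single direction of an $\eps^{1/2}$-net, whereas the paper averages over the whole $\eps^{1/2}$-cap of good directions via Fubini; the two are interchangeable. One small fix: the sum should run only over net directions with $w_K(u')\ge1-B_d\eps$, i.e.\ those that actually receive an edge. For narrow directions $N_+N_-$ can be as large as $k^2$, and Lemma~\ref{lem:capcount} does not apply there.
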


\begin{proof}
Let $E_{\rm ord}:=\{(a,b):a\sim b\}$ be the set of ordered edges.  Since $M$ is a $0$-$1$ symmetric matrix,
\[
   \lambda_1(M)^2\le\operatorname{tr}(M^2)=|E_{\rm ord}|.
\]

\smallskip
\noindent
For each ordered edge $(a,b)$, choose $x\in Q_a$ and $y\in Q_b$ with $|x-y|\ge1-\eps$, and put $v=(x-y)/|x-y|$.  If $u\in S^{d-1}$ satisfies $|u-v|\le c\eps^{1/2}$ for a small dimensional constant $c>0$, then
\[
   (x-y)\cdot u\ge1-O_d(\eps).
\]
Since $(x-y)\cdot u\le w_K(u)\le\diam(K)\le1$, it follows that $w_K(u)\ge1-O_d(\eps)$.  Moreover, if $z\in K$ satisfies $z\cdot u=h_K(u)$, then $z\cdot u-y\cdot u\le |z-y|\le1$, and hence
\[
   h_K(u)-x\cdot u\le 1-(x-y)\cdot u=O_d(\eps).
\]
Thus $x\in C_+(u,B_d\eps)$ after increasing $B_d$; similarly $y\in C_-(u,B_d\eps)$.  Therefore $Q_a$ meets $C_+(u,B_d\eps)$ and $Q_b$ meets $C_-(u,B_d\eps)$.

\bigskip
\noindent
Hence every ordered edge is counted for a set of directions of spherical measure $\gtrsim_d\eps^{(d-1)/2}$.  By Fubini and Lemma \ref{lem:capcount},
\[
\begin{aligned}
   |E_{\rm ord}|\,\eps^{(d-1)/2}
   &\lesssim_d
   \int_{\{u:w_K(u)\ge1-B_d\eps\}}
   N_+(u,B_d\eps)N_-(u,B_d\eps)\,d\sigma(u)  \\
   &\lesssim_d
   \eps^{-(d-1)}.
\end{aligned}
\]
Thus
\[
   |E_{\rm ord}|\lesssim_d \eps^{-3(d-1)/2},
\]
and so
\[
   \lambda_1(M)\lesssim_d \eps^{-3(d-1)/4}.
\]
\end{proof}

\section{Proofs of the Main Theorems}

\begin{proof}[Proof of Theorem \ref{thm:plane}]
The planar common-neighborhood estimate gives
\[
   \lambda_1(M)
   \lesssim
   \eps^{-1/2}\log(1/\eps)^{1/2}.
\]
The spectral reduction, Proposition \ref{prop:spectral-reduction}, gives
\[
   \cN_\eps(P)
   \gtrsim
   \frac{\eps^{1/2}}{\log(1/\eps)^{1/2}}\cdot\cA_\eps(P).
\]
\end{proof}

\begin{proof}[Proof of Theorem \ref{thm:higher}]
For $d\ge3$, Proposition \ref{prop:trace} gives
\[
   \lambda_1(M)\lesssim_d \eps^{-3(d-1)/4}.
\]
Applying Proposition \ref{prop:spectral-reduction} completes the proof.
\end{proof}

\end{document}